\newcommand\footnoteref[1]{\protected@xdef\@thefnmark{\ref{#1}}\@footnotemark}
\newtheorem{lemma}{Lemma}[section]
\newtheorem{thm}[lemma]{Theorem}
\newtheorem{prop}[lemma]{Proposition}
\newtheorem{cor}[lemma]{Corollary}
\newtheorem*{cor*}{Corollary}
\theoremstyle{definition}
\newtheorem{quest}[lemma]{Question}
\theoremstyle{remark}
\newcommand{\matR} {\ensuremath {\mathbb{R}}}
\newcommand{\matZ} {\ensuremath {\mathbb{Z}}}
\newcommand{\SO}{\ensuremath {\mathrm{SO}}}
\newcommand{\Isom}{\ensuremath {\mathrm{Isom}}}
\newcommand\reallywidehat[1]{%
\savestack{\tmpbox}{\stretchto{%
  \scaleto{%
    \scalerel*[\widthof{\ensuremath{#1}}]{\kern-.6pt\bigwedge\kern-.6pt}%
    {\rule[-\textheight/2]{1ex}{\textheight}}%WIDTH-LIMITED BIG WEDGE
  }{\textheight}% 
}{0.5ex}}%
\stackon[1pt]{#1}{\tmpbox}%
}
\author{Alexander Kolpakov}
\address{Institut de Math\'ematiques, Universit\'e de Neuch\^atel, Rue Emile-Argand 11, Neuch\^atel, Suisse / Switzerland}
\email{kolpakov dot alexander at gmail dot com}
\author{Stefano Riolo}
\address{Section de Math\'ematiques, Universit\'e de Gen\`eve, Rue du Conseil-G\'en\'eral 7-9, 1205 Gen\`eve, Suisse / Switzerland}
\email{stefano dot riolo at unige dot ch}
\author{Steven T. Tschantz}
\address{Department of Mathematics, Vanderbilt University, 1326 Stevenson Center Ln, Nashville, TN 37212, U.S.A.}
\email{steven dot tschantz at vanderbilt dot edu}
\thanks{A. K. and S. R. were supported by the SNSF project no. PP00P2-170560. S. R. has been supported by the  MIUR--PRIN project 2017JZ2SW5 and the SNSF ``Ambizione'' grant PZ00P2-193559. }
\title[The signature of cusped hyperbolic $4$-manifolds]{The signature of cusped  hyperbolic $4$-manifolds}
\begin{document}

\begin{abstract}
In this note we show that every integer is the signature of a non-compact, oriented, hyperbolic $4$--manifold of finite volume, and give some partial results on the geography of such manifolds. The main ingredients are a theorem of Long and Reid, and the explicit construction of a hyperbolic $24$--cell manifold with some special topological properties.
\end{abstract}

\maketitle

\epigraph{\textit{Few things are harder to put up with than the annoyance of a good example.}}{-- Mark Twain}

\section{Introduction}

In the recent survey \cite{M} on hyperbolic $4$--manifolds, Martelli asks whether
one can find a cusped hyperbolic $4$--manifold with non--vanishing signature (see \cite[Section 4]{M} for other open questions).
The main purpose of this note is to prove the following:

\begin{thm} \label{thm:signature_spectrum}
Every integer is the signature of a cusped hyperbolic $4$--man\-i\-fold.
\end{thm}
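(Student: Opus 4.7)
\emph{Proof plan.}
The strategy I would take combines the Atiyah--Patodi--Singer (APS) signature formula with one explicit construction. For an oriented cusped hyperbolic $4$-manifold $M$ with compact core $\bar M$ and flat cusp cross-sections $N_1,\dots,N_k$, APS gives
\[
\sigma(M) \;=\; \tfrac{1}{3}\int_{\bar M} p_1(\bar M) \;-\; \sum_{i=1}^k \eta(N_i).
\]
A hyperbolic metric is conformally flat, so its Weyl tensor vanishes; in dimension $4$ this forces $p_1 \equiv 0$, and hence $\sigma(M) = -\sum_i \eta(N_i)$. The signature is therefore controlled entirely by the $\eta$-invariants of the cusp cross-sections, which take rational values depending only on the diffeomorphism type of each (orientable) closed flat $3$-manifold.

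The heart of the proof should then be the construction of a single cusped hyperbolic $4$-manifold $M_0$ with $|\sigma(M_0)| = 1$ and $b_1(M_0) \geq 1$. I would attempt this by explicit side pairings of the $24$ octahedral facets of one (or a small number of) copies of the ideal right-angled $24$-cell $P \subset \matH^4$: the combinatorics is finite and amenable to an exhaustive, possibly computer-assisted, search among symmetric colourings. The theorem of Long and Reid presumably enters here to guarantee enough flexibility in which closed flat $3$-manifolds can be realised as cusp cross-sections of a commensurable arithmetic manifold, so that one can force $\sum_i \eta(N_i) = \pm 1$ exactly, while keeping $M_0$ oriented and with $b_1 \geq 1$.

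Given such an $M_0$, every non-zero integer $n$ is realised by a finite cyclic cover. The hypothesis $b_1(M_0) \geq 1$ produces a surjection $\pi_1(M_0) \twoheadrightarrow \matZ$, and hence a connected cyclic cover $M_n \to M_0$ of degree $|n|$ for every $n \neq 0$. Since signature is multiplicative in orientation-preserving finite covers, $\sigma(M_n) = \pm |n|$, and reversing orientation on $M_n$ gives the other sign. The case $n = 0$ is trivial: take any cusped hyperbolic $4$-manifold admitting an orientation-reversing isometry, or any $24$-cell gluing whose cusp $\eta$-invariants already cancel.

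The main obstacle I anticipate is producing $M_0$ with $|\sigma(M_0)| = 1$ on the nose, rather than merely some non-zero rational: this is an exact numerical constraint on the combinatorics of the side pairing, stacked on top of orientability and the Betti-number requirement. I expect this is where the explicit $24$-cell manifold alluded to in the abstract (with its ``special topological properties'') is introduced, and where most of the technical work is concentrated; verifying $b_1(M_0) \geq 1$ then reduces to computing the rank of the abelianisation of the resulting side-pairing group, and verifying $|\sigma(M_0)| = 1$ to summing a small number of known rationals.
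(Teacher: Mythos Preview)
Your overall architecture matches the paper's: use the Long--Reid adaptation of APS so that $\sigma(M)=-\sum_i\eta(N_i)$, build one $24$--cell manifold $M_0$ with a single $F_4$--cusp (so $\sigma(M_0)=\pm1$) and torus cusps elsewhere, then pass to cyclic covers. The role you assign to Long--Reid is slightly off (it \emph{is} the signature formula, not a realisability statement about cusp types), but that is cosmetic.

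The genuine gap is the sentence ``signature is multiplicative in orientation-preserving finite covers''. That is true for closed manifolds, but it is \emph{false} for cusped hyperbolic $4$--manifolds; the paper says so explicitly, and its Corollary~\ref{cor:virtually_0} makes this vivid: every such manifold has a finite cover, all of whose further covers have signature zero. The mechanism is clear from the formula $\sigma=-\sum\eta$: under a degree-$d$ cover the cusp cross-sections themselves get covered, and an $F_4$ cross-section can unwind (e.g.\ a $4$--fold cover of $F_4$ along the rototranslation direction is a $3$--torus with $\eta=0$), so the $\eta$--contributions need not scale by $d$.

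What the paper actually does, and what your plan is missing, is to demand more of the surjection $\pi_1(M_0)\twoheadrightarrow\matZ$ than mere existence: one needs a homomorphism $h$ with $\pi_1(C)\subset\ker h$, where $C$ is the $F_4$--cusp. Then in the associated $n$--fold cyclic cover the cusp $C$ lifts to $n$ disjoint, coherently oriented copies of $F_4$, the torus cusps lift to tori, and the formula gives $\sigma=\pm n$. Finding such an $h$ is an extra, nontrivial constraint on the side pairing beyond $b_1\geq1$; this is precisely one of the ``special topological properties'' of the paper's explicit manifold (see Theorem~\ref{thm:construction} and Table~\ref{tab7}). Your proposal should replace the multiplicativity claim with this cusp-trivialising hypothesis on the homomorphism.
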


All manifolds in the paper are assumed connected and oriented unless otherwise stated. Hyperbolic manifolds are understood to be complete and of finite volume. Non--compact hyperbolic manifolds are called \emph{cusped}. 

Closed hyperbolic $4$--manifolds have vanishing signature, while for cusped manifolds this property holds virtually (in a strong sense, see Corollary \ref{cor:virtually_0}). The latter fact follows from a result of Long and Reid \cite{LR} which plays an important role in this paper. It is worth noting that for cusped manifolds the signature is not necessarily multiplicative under finite coverings.

As a byproduct of our construction, we obtain some results on the ``geography problem'' for cusped hyperbolic $4$--manifolds. The latter asks about realising a given pair of of integers as the Euler characteristic $\chi(M)$ and signature $\sigma(M)$ of a cusped hyperbolic $4$--manifold $M$.
Indeed, Theorem \ref{thm:signature_spectrum} is a consequence of the following:

\begin{thm}\label{thm:main}
For every pair of positive integers $m, n$ with $m$ odd, there exists a cusped hyperbolic $4$--manifold $M$ with $\sigma(M) = \pm n$ and $\chi(M) = mn$.
\end{thm}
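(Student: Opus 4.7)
The plan is to take a single cusped hyperbolic $4$--manifold $W$ (the $24$--cell manifold promised in the introduction, to be built later in the paper) as the universal building block, with the following properties to be verified: $\chi(W) = 1$, $\sigma(W) = \pm 1$, and $W$ admits an orientation--reversing isometry so that the sign of the signature can be flipped freely. The nontriviality of $\sigma(W)$ will be detected via the Atiyah--Patodi--Singer signature theorem: since the Pontryagin form of a hyperbolic metric vanishes pointwise, $\sigma(W) = -\sum_i \eta(B_i)$, where $B_i$ are the flat orientable cusp cross-sections of $W$, and so one needs at least one cusp $B_i$ whose $\eta$--invariant is nonzero---that is, a non-torus cusp.

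Given $m$ odd and $n \geq 1$, the plan is to produce a degree--$mn$ cover $M \to W$ in which each non-torus cusp of $W$ lifts to exactly $n$ cusps of $M$ with cross-section isomorphic to the original, while torus cusps lift however they wish (they contribute nothing to $\sigma$). Then $\chi(M) = mn$ automatically, and $\sigma(M) = n \cdot \sigma(W) = \pm n$ by additivity of the $\eta$--invariants. At the level of cross-sections, each essential lift is a degree--$m$ self-cover of a non-torus flat $3$--manifold $B = T^{3}/H$. The parity hypothesis enters precisely here: odd-index sublattices of $\matZ^{3}$ are automatically invariant under any finite linear holonomy group $H$, so odd-degree self-covers of $B$ exist and preserve the flat isometry type (hence the $\eta$--invariant), whereas for even $m$ the $H$--invariance can fail, altering the cusp topology and breaking the signature count.

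The main obstacle is producing the required subgroup $\Gamma < \pi_1(W)$ of index $mn$ whose intersection with each peripheral subgroup $P_i$ realizes the prescribed self-cover of $B_i$ and whose $\pi_1(W)$--action partitions the essential cusps into exactly $n$ orbits apiece. I would construct $\Gamma$ using the arithmetic structure of $\pi_1(W)$ (inherent in the $24$--cell presentation over the integers), imposing congruence conditions modulo an odd integer that simultaneously control every cusp. Long--Reid--style subgroup separability for the virtually abelian peripheral subgroups---the same tool underlying Corollary \ref{cor:virtually_0}---should guarantee that sufficiently fine congruence covers realize any prescribed peripheral pattern. Once $\Gamma$ is in hand, Theorem \ref{thm:main} follows from multiplicativity of $\chi$ under covers together with additivity of $\eta$ over cusp cross-sections; the sign in $\sigma(M) = \pm n$ is then adjusted by pre- or post-composing with the orientation--reversing isometry of $W$.
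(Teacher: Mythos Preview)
Your high-level plan matches the paper's: build a $24$--cell manifold $W$ with $\chi(W)=1$ and a single non-torus cusp of type $F_4$, so that Long--Reid gives $\sigma(W)=\pm 1$; then pass to a degree-$mn$ cover in which the $F_4$ cusp lifts to exactly $n$ coherently oriented $F_4$ cusps, each via a degree-$m$ self-cover. The role of $m$ odd is also correctly located (the rotational part of $a^m$ still has order $4$ exactly when $m\equiv\pm1\bmod 4$, i.e.\ $m$ odd).

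The gap is in your mechanism for producing the cover, which is where the actual content of the proof lies. Congruence covers and subgroup separability do not yield a subgroup of \emph{prescribed index} $mn$ with \emph{prescribed peripheral pattern}: separability only guarantees that some finite-index subgroup misses a given element, with no control on the index, and congruence conditions typically force the index to be large and of a specific arithmetic shape. Your supporting claim that odd-index sublattices of $\matZ^3$ are automatically invariant under any finite holonomy $H$ is also false (for instance the index-$3$ sublattice $\langle 3t_1,t_2,t_3\rangle$ is not invariant under the order-$4$ holonomy of $F_4$); so ``any odd-degree cover works'' is not the right formulation.

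The paper sidesteps all of this by building the covering mechanism into $W$ from the outset. The manifold is chosen (via computer search, Theorem~\ref{thm:construction}) so that there exist two \emph{explicit} surjections $h,v\colon\pi_1(W)\to\matZ$ with $\pi_1(C)\subset\ker(h)$ and $v(t_1)=v(t_2)=0$, $v(a)=1$. One then takes the cyclic $n$-cover associated to $h\bmod n$ (the $F_4$ cusp splits into $n$ trivially covered copies), followed by the cyclic $m$-cover associated to $v\bmod m$ (each $F_4$ cusp lifts to a single $F_4$ via the specific subgroup $\langle t_1,t_2,a^m\rangle$, using $m$ odd). This two-step cyclic construction gives exact index $mn$ and exact peripheral control, which your congruence/separability route does not deliver. (Also, you do not need an orientation-reversing isometry of $W$: if $\sigma(W)=-1$, simply replace $W$ by $\overline{W}$.)
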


Recall that hyperbolic $4$--manifolds have positive Euler characteristic by the generalised Gau\ss--Bonnet formula.

\subsection{Consequences and questions}

As shown by Ratcliffe and Tschantz \cite{RT}, every positive integer is realised as $\chi(M)$ for some cusped hyperbolic $4$--manifold $M$. However, all the manifolds constructed in \cite{RT} (as well as their covers) happen to have vanishing signature, as well as any other cusped hyperbolic $4$--manifold that we could find in the literature. Combining this fact (or Corollary \ref{cor:virtually_0}) with Theorem \ref{thm:main}, we obtain that for every integer $n$ there exists a cusped hyperbolic $4$--manifold $M$ with $\sigma(M) = n$ and $\chi(M)$ arbitrarily big.

On the other hand (Proposition \ref{prop:universal-constant}), every cusped hyperbolic $4$--manifold $M$ satisfies
$$\chi(M) > 0.03493 \cdot |\sigma(M)|.$$
It thus seems reasonable trying to minimise $\chi$ for any fixed value of $|\sigma|$. For $\sigma = 0$, the minimum possible $\chi = 1$ is realised by \cite{RT}. Theorem \ref{thm:main} implies the following fact.

\begin{cor} \label{cor:chi=sigma}
For every positive integer $n$, there exists a cusped hyperbolic $4$--manifold with $\chi(M) = \sigma(M) = n$.
\end{cor}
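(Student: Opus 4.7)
The plan is to derive Corollary \ref{cor:chi=sigma} as an immediate specialisation of Theorem \ref{thm:main}. Given a positive integer $n$, I would apply Theorem \ref{thm:main} with the odd value $m = 1$, which produces a cusped hyperbolic $4$-manifold $M$ with $\chi(M) = 1 \cdot n = n$ and $\sigma(M) = \pm n$.

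If the sign coming out of Theorem \ref{thm:main} happens to be $+$, we are already done. If instead $\sigma(M) = -n$, I would pass to the manifold $\overline{M}$ obtained from $M$ by reversing the orientation. Since $\overline{M}$ is still a connected, oriented, cusped hyperbolic $4$-manifold of finite volume, and since the Euler characteristic is an unoriented invariant while the signature is skew under orientation reversal (recall $\sigma(\overline{M}) = -\sigma(M)$ for any oriented $4$-manifold, as reversing orientation negates the intersection form on $H^2$), the manifold $\overline{M}$ satisfies $\chi(\overline{M}) = n$ and $\sigma(\overline{M}) = n$, as required.

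There is no real obstacle here beyond the sign bookkeeping: the entire content of the corollary is packed into Theorem \ref{thm:main}, and the role of the corollary is simply to isolate the diagonal case $m = 1$ and to note that, within the class of oriented hyperbolic $4$-manifolds considered, orientation reversal is a legitimate operation that lets us absorb the ambiguity in the sign of $\sigma$.
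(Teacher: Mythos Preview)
Your argument is correct and matches the paper's approach: the paper simply states that the corollary is implied by Theorem~\ref{thm:main}, leaving the specialisation $m=1$ and the orientation-reversal for the sign of $\sigma$ implicit, exactly as you spell out.
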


We shall call the quantity
$$\alpha(M) = \sigma(M)/\chi(M)$$
\textit{the slope} of $M$. By the above inequality, the slope of a cusped hyperbolic $4$--manifold is always bounded, and a natural ``geography'' problem is to determine which slopes can be realised. Theorem \ref{thm:main} gives a partial answer:

\begin{cor}\label{cor:slopes} 
For every odd integer $m$, there exist infinitely many cusped hyperbolic $4$--manifolds with slope $1/m$.
\end{cor}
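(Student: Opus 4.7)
The plan is to deduce Corollary \ref{cor:slopes} directly from Theorem \ref{thm:main} by a short sign-bookkeeping argument, together with a distinctness observation on Euler characteristics. No new geometric construction should be needed.

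Given an odd integer $m$, I would first write $m = \epsilon m'$ with $\epsilon \in \{+1,-1\}$ and $m' = |m|$, a positive odd integer. For each positive integer $n$, I would invoke Theorem \ref{thm:main} on the pair $(m', n)$ to obtain a cusped hyperbolic $4$-manifold $M_n$ with $\chi(M_n) = m' n$ and $\sigma(M_n) = \pm n$. Since the theorem realises both signs of the signature, I would select the construction with $\sigma(M_n) = \epsilon n$. The slope of $M_n$ is then
$$\alpha(M_n) \;=\; \frac{\sigma(M_n)}{\chi(M_n)} \;=\; \frac{\epsilon n}{m' n} \;=\; \frac{\epsilon}{m'} \;=\; \frac{1}{m},$$
as required. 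To upgrade this to \emph{infinitely many} manifolds with slope $1/m$, I would observe that the values $\chi(M_n) = m' n$ are pairwise distinct as $n$ varies over $\matZ_{>0}$, so the $M_n$ are pairwise non-homeomorphic.

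The substantive content is entirely packaged inside Theorem \ref{thm:main}, which provides both signs of the signature together with the prescribed Euler characteristic; I do not anticipate any separate obstacle in the present corollary. The only minor subtlety is that $m$ is allowed to be a negative odd integer, which is dealt with by the choice of the sign $\epsilon$ above; everything else is accounting. If one wished, one could additionally record that the manifolds $M_n$ cannot coincide with manifolds realising a different slope $1/m''$ for $m \neq m''$, but this is unnecessary for the statement as written.
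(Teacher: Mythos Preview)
Your proposal is correct and follows exactly the route the paper intends: the corollary is stated as an immediate consequence of Theorem~\ref{thm:main}, and the paper gives no separate argument beyond that. Your sign bookkeeping for negative odd $m$ and the distinctness observation via $\chi(M_n)=|m|n$ are the natural way to make the implication explicit.
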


In particular, the maximum slope realised by our construction equals $1$. Note that a well--known conjecture dating back to Gromov's work on bounded cohomology \cite[\S 8.A4]{Gromov} states that every \emph{closed} aspherical $4$--manifold $M$ satisfies $|\alpha(M)| \leq 1$, which is known as Winkelnkemper's inequality \cite{JK}. 

In the setting of our work, the following questions arise naturally. 

\begin{quest}
Can we describe the set of pairs $(\chi(M), \sigma(M))$ for all possible cusped hyperbolic $4$--manifolds $M$? In other words, what is the geography of cusped hyperbolic $4$--manifolds?
\end{quest}

\begin{quest}
What is the maximum slope of a cusped hyperbolic $4$--manifold?
\end{quest}

\subsection{On the proof}

Our proof of Theorem \ref{thm:main} is constructive and rather simple. We explicitly build a cusped hyperbolic $4$--manifold $M$ satisfying $\chi(M) = \sigma(M) = 1$ and such that for every $m, n \geq 1$ with $m$ odd, there exists an $mn$--sheeted covering $M_{m,n} \to M$ with $\sigma(M_{m,n}) = n$. 

An essential tool is an adaptation of the Atiyah--Patodi--Singer formula for cusped hyperbolic $4$--manifolds by Long and Reid (cf. Theorem \ref{thm:long-reid}), combined with the results of Ouyang \cite{O}, thanks to which the signature can be expressed only in terms of the oriented homeomorphism classes of the cusp sections.

Similar to \cite{RT, RT2}, the manifold $M$ is obtained by gluing the sides of the ideal right-angled $24$--cell. In particular, $\chi(M) = 1$. The side pairing is performed in order to have one cusp with section a ``quarter twist'' flat $3$--manifold $F_4$, while the remaining cusps have $3$--torus sections. Then Long and Reid's signature formula gives  $\sigma(M) = \pm1$.

Moreover, there exist 
homomorphisms $\pi_1(M) \to \matZ$ that map to zero some generators of the parabolic subgroup of the $F_4$--cusp, in a way to get an $n$--sheeted cyclic covering $M_{n} \to M$ under which $\sigma$ is multiplicative, and an $m$--sheeted cyclic covering $M_{m,n} \to M_n$ under which $\sigma$ does not change. The latter condition is satisfied if $m$ is odd: in this case, the manifold $M_{m,n}$ has exactly $n$ cusps of type $F_4$, all coherently oriented, while the remaining ones have $3$--torus sections. This implies $\sigma(M_{m,n}) = \pm n$.

As in \cite{RT2}, the starting point to find $M$ is an extensive computer search. Despite the fact that $M$ is produced by computer, it has a relatively simple structure and all the aforementioned properties can be verify by hand. However, finding such a starting manifold $M$ by hand is conceivably impossible within any reasonable time if one wants to search through all or a sufficiently large number of side pairings. 

We would like to stress the fact that the manifold $M$ used in our present construction does not appear to be special: we only concentrate on it because of its simple structure that makes our proofs verifiable by hand. Another construction of Tschantz's, the details of which are rather technical and impossible to verify without computer aid, gives on the order of $m^m$ commensurability classes of manifolds with $\chi \leq m$ for any fixed value of $\sigma$ and $m$ large enough (cf. \cite{BGLM, GL}). Presenting the details here, however, would obfuscate our main goal, that is proving Theorem \ref{thm:signature_spectrum} in a relatively simple way.

\subsection{Structure of the paper}

Some general facts on the geography of cusped hyperbolic $4$--manifolds are given in Section \ref{sec:preliminaries}. The proof of Theorem \ref{thm:main} follows in Section \ref{sec:proof}.

\subsection*{Acknowledgements}
The authors
are grateful to Bruno~Martelli for suggesting a strategy that helped simplifying
their
previous proof of Theorem \ref{thm:signature_spectrum}. They would also like to thank John~Ratcliffe and Alan~Reid for showing interest in this work.

\section{Preliminaries} \label{sec:preliminaries}

Recall that all manifolds in this paper are connected and oriented, unless otherwise stated. All homology and cohomology groups are understood with integer coefficients.

\subsection{Signature and geography}

Let $X$ be a compact $4$--manifold with boundary, and let $[X, \partial X] \in H_4(X, \partial X) \cong \mathbb{Z}$ be its fundamental class.

The cup product on $H^2(X, \partial X)$ defines the following symmetric bilinear form, called the \emph{intersection form} of $X$:
\begin{equation*}
H^2(X, \partial X) \times H^2(X, \partial X) \to \mathbb{Z}, \quad (\alpha, \beta) \mapsto \alpha \smile \beta \, ([X, \partial X]).
\end{equation*}
By the Poincar\'e--Lefschetz duality, the radical of the intersection form is the kernel of the natural map $H^2(X, \partial X) \to H^2(X)$.

Let $q_+$ and $q_-$ be the positive and negative inertia indices of the associated quadratic form over $\matR$. Then the \emph{signature} of $X$ is defined as
$$\sigma(X) = q_+ - q_{-} \in \matZ.$$

The notion of ``geography'' for $4$--manifolds appears to be classical and has been studied in different contexts by many authors (see \cite{Stipsicz} for a detailed survey). For $M$ a $4$--manifold, the \textit{geography map} can be defined as $M \mapsto (\chi(M),\, \sigma(M))$. The \textit{slope} of $M$ is $\alpha(M) = \sigma(M)/\chi(M)$. Note that in \cite{Stipsicz} the slope and geography map are defined for manifolds with complex structures and thus expressed via Chern numbers: we slightly modify the definitions in our setting.

Here and below we shall be interested in the class of cusped hyperbolic $4$--manifolds, and the behaviour of the geography and slope maps on it. 

\subsection{Signature of hyperbolic $4$--manifolds}

If $M$ is a closed hyperbolic $4$--manifold, then it follows from the Hirzebruch signature theorem theorem that $\sigma(M) = 0$, since by a theorem of Chern the first Pontryagin class vanishes \cite[Theorem 11.3.3]{R} (as it more generally does for locally conformally flat manifolds, cf. \cite{LR}). 

Let now $M$ be a cusped hyperbolic $4$--manifold. Since $M$ is homeomorphic to the interior of a compact $4$--manifold $X$ with boundary, then by the Poincar\'e--Lefschetz duality we have a well-defined intersection form on $H_2(M)
\cong H^2(X, \partial X)$ and signature $\sigma(M) := \sigma(X)$.

In \cite{LR} Long and Reid provided an adaptation of the Atiyah--Patodi--Singer formula \cite{APS} for cusped hyperbolic $4$--manifolds. 

\begin{thm}[Long--Reid] \label{thm:long-reid}
Let $M$ be a hyperbolic $4$--manifold with $m$ cusps $C_1, \ldots, C_m$, and let $S_i$ be a horospherical section of $C_i$. Then
\begin{equation*}
\sigma(M) = - \sum_{i=1}^m \eta(S_i).
\end{equation*}
\end{thm}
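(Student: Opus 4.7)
The natural plan is to reduce the statement to the Atiyah--Patodi--Singer (APS) signature formula for compact oriented $4$--manifolds with boundary. First I would truncate each cusp $C_i$ along its horospherical section $S_i$, producing a compact oriented $4$--manifold $X$ with $\partial X = \bigsqcup_{i=1}^m S_i$; by definition $\sigma(X) = \sigma(M)$, and each $S_i$ inherits a flat metric $g_i$ coming from the horosphere.

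The hyperbolic metric on $X$ is not a product near $\partial X$ (in horospherical coordinates it has the warped form $dt^2 + e^{-2t} g_i$), so to apply APS one first has to modify it inside a collar of $\partial X$ into the straight product $dt^2 + g_i$. Call the resulting smooth metric $\tilde g$ on $X$. The APS signature theorem then reads
$$\sigma(X) \;=\; \frac{1}{3}\int_X p_1(\tilde g) \;-\; \sum_{i=1}^m \eta(S_i, g_i),$$
so the statement reduces to showing that the bulk Pontryagin integral vanishes.

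For this I would invoke the fact that every hyperbolic manifold is locally conformally flat (LCF), hence by a theorem of Chern its Pontryagin forms vanish pointwise; the same is true for the flat product cylinder $dt^2 + g_i$. It therefore suffices to interpolate between the two metrics in the collar within the class of LCF metrics. Concretely, both metrics are conformal rescalings of a flat metric on $S_i \times \matR$, and one can smoothly interpolate between the two conformal factors across the collar. This yields an LCF metric $\tilde g$ on $X$, for which $p_1(\tilde g) \equiv 0$, so $\int_X p_1(\tilde g) = 0$ and the desired identity $\sigma(M) = -\sum_i \eta(S_i)$ follows.

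The hard part will be executing this LCF interpolation carefully --- ensuring smoothness at both ends of the collar and consistency across all cusps. A cleaner alternative would be to apply APS for manifolds with cylindrical ends directly to $\mathrm{int}(X)$, and argue the vanishing of the interior term via a Chern--Simons transgression on the boundary, using that each $(S_i, g_i)$ is flat and therefore has trivial secondary Pontryagin invariants. In either approach one should also record that $\eta(S_i, g_i)$ depends only on the oriented diffeomorphism type of the flat manifold $S_i$ (up to rescaling of the metric), which legitimises writing $\eta(S_i)$ and is exactly the property that will be exploited together with Ouyang's classification in the sequel.
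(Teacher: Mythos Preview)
The paper does not supply a proof of this statement: it is quoted as a result of Long and Reid \cite{LR}, itself an adaptation of the Atiyah--Patodi--Singer formula. So there is no ``paper's own proof'' to compare against. Your sketch is in fact essentially the argument Long and Reid give: truncate the cusps, apply APS to the resulting compact $X$, and make the bulk term $\tfrac{1}{3}\int_X p_1$ vanish by arranging the metric to be locally conformally flat everywhere. Your observation that in the cusp $dt^2 + e^{-2t}g_i$ is conformal to the flat product $du^2 + g_i$ (via $u = e^{t}$) is exactly what makes the LCF interpolation work, and the smoothness at the ends of the collar is routine once one interpolates the conformal factor rather than the metric itself.

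One small comment: your final paragraph about $\eta(S_i,g_i)$ being independent of the flat metric is not part of the Long--Reid theorem, and the paper separates it out as a result of Ouyang \cite{O}. It is fine to mention it as motivation, but it is not needed for the statement as written (which allows $\eta(S_i)$ to depend on the horospherical metric).
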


Here $\eta$ is the so-called \emph{eta invariant} of a closed oriented Riemannian $3$--mani\-fold, see \cite{APS, LR}. Same as the signature, 
$\eta$ changes its sign when the orientation of the manifold is reversed, and thus vanishes on achiral manifolds. Since cusped hyperbolic manifolds have virtually torus cusps \cite[Theorem 3.1]{MRS}, we have the following corollary. 

\begin{cor} \label{cor:virtually_0}
For every cusped hyperbolic $4$--manifold $M$ there is a finite covering $M' \to M$ such that $\sigma(M'') = 0$ for every finite covering $M'' \to M'$.
\end{cor}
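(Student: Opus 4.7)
The plan is to combine the Long--Reid formula (Theorem~\ref{thm:long-reid}) with the virtual torus cusp theorem of \cite{MRS} and the vanishing of $\eta$ on achiral $3$--manifolds, which has just been noted in the paragraph preceding the statement.

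First I would invoke \cite[Theorem 3.1]{MRS} to pass to a finite covering $M' \to M$ all of whose cusp sections are flat $3$--tori. Next I would observe that this property is inherited by further covers: if $M'' \to M'$ is any finite covering, then each cusp of $M''$ covers a cusp of $M'$, and the corresponding horospherical section of $M''$ is a connected finite cover of a flat $3$--torus, hence itself a flat $3$--torus.

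The key remaining point is that a flat $3$--torus $T^3 = \matR^3/\Lambda$ is achiral: the involution $x \mapsto -x$ descends to an orientation-reversing isometry of $T^3$ (since we are in odd dimension). By the remark recalled right after Theorem~\ref{thm:long-reid}, the eta invariant vanishes on achiral Riemannian $3$--manifolds, so $\eta(S_i) = 0$ for every cusp section $S_i$ of $M''$. The Long--Reid formula then gives
\[
\sigma(M'') = - \sum_{i} \eta(S_i) = 0,
\]
which is the desired conclusion.

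The argument is essentially an assembly of already-stated ingredients, so there is no real obstacle; the only subtlety worth flagging is making sure the ``torus cusps'' property is hereditary under further finite covers, which follows immediately from the fact that finite connected covers of tori are tori.
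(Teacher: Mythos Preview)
Your proof is correct and follows exactly the approach intended in the paper: the paper's justification is the single sentence ``Since cusped hyperbolic manifolds have virtually torus cusps \cite[Theorem 3.1]{MRS}, we have the following corollary,'' together with the preceding remark that $\eta$ vanishes on achiral $3$--manifolds. You have simply unpacked these ingredients (and added the observation that torus cusps are inherited under further finite covers), which is precisely what is implicit in the text.
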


This shows that, in particular, constructing cusped hyperbolic manifolds with given signatures likely cannot be done by considering some sort of ``generic'' or ``random'' coverings of a given particular manifold. We shall concentrate on using cyclic coverings as in this case preserving the topological type of cusps in the covering is relatively easy.

There are precisely six closed orientable flat $3$--manifolds up to homeomorphism \cite{HW}. We denote them in the order given by Hantzsche and Wendt \cite{HW} as follows: the  $3$--torus $F_1$, the ``half-twist'' manifold $F_2$, the ``third-twist'' manifold $F_3$,  the ``quarter-twist'' manifold $F_4$, the ``sixth-twist'' manifold $F_5$, and the Hantzsche--Wendt manifold $F_6$. For $i = 1, \ldots, 5$, the manifold $F_i$ is a mapping torus over $S^1 \times S^1$ with monodromy of order $1$, $2$, $3$, $4$, and $6$, respectively.

As shown by Ouyang \cite{O}, the $\eta$--invariant of a flat $3$--manifold does not depend on the chosen flat metric, and thus represents a topological invariant for such manifolds. More precisely \cite{O}, we have:
$$\eta(F_1) = \eta(F_2) = \eta(F_6) = 0,$$
$$\eta(F_3) = \pm \frac{2}{3},\quad \eta(F_5) = \pm \frac{4}{3},\quad \eta(F_4) = \pm1.$$

In particular, a hyperbolic $4$--manifold $M$ whose unique non--torus cusp has $F_4$ section has signature $\pm 1$. Moreover, $\sigma(\widetilde{M}) = \pm n$ for any $n$--sheeted regular covering $\widetilde{M} \to M = \widetilde{M} \slash G$ with $G < \Isom^+(\widetilde{M})$ acting transitively on the $F_4$--cusps of $\widetilde{M}$.

\subsection{The slope is bounded}

The previous facts together with the volume estimates of Kellerhals \cite{K} imply the following lower bound, that we believe however to be far from sharp.

\begin{prop}\label{prop:universal-constant}
Every cusped hyperbolic $4$--manifold $M$ satisfies
$$|\alpha(M)| < 28.62869.$$
\end{prop}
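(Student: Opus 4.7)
My plan is to combine four ingredients: the Long--Reid formula (Theorem \ref{thm:long-reid}), Ouyang's computation of the $\eta$-invariants of the six closed orientable flat $3$-manifolds (recalled just above), Kellerhals' universal lower bound \cite{K} for the volume of a single cusp of a hyperbolic $4$-manifold, and the generalised Gau\ss--Bonnet formula $\chi(M) = \frac{3}{4\pi^2}\,\Vol(M)$.

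First, since $M$ is oriented, every horospherical cross-section $S_i$ of a cusp $C_i$ of $M$ is a closed orientable flat $3$-manifold, hence homeomorphic to one of $F_1, \ldots, F_6$. Ouyang's values then give $|\eta(S_i)| \leq 4/3$, with equality attained only by $F_5$. Theorem \ref{thm:long-reid} and the triangle inequality yield
\begin{equation*}
|\sigma(M)| \;\leq\; \sum_{i=1}^{m} |\eta(S_i)| \;\leq\; \tfrac{4}{3}\, m,
\end{equation*}
where $m$ denotes the number of cusps of $M$.

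Next, I would invoke Kellerhals' result, which provides an explicit constant $V_0 > 0$ such that each cusp of a hyperbolic $4$-manifold admits an embedded horoball neighborhood of volume at least $V_0$, with these neighborhoods for distinct cusps pairwise disjoint. Summing over cusps yields $\Vol(M) \geq m\,V_0$, and the Gau\ss--Bonnet formula then gives $\chi(M) \geq \frac{3 V_0}{4\pi^2}\, m$. Dividing the signature bound by the Euler-characteristic bound produces
\begin{equation*}
|\alpha(M)| \;=\; \frac{|\sigma(M)|}{\chi(M)} \;\leq\; \frac{16\,\pi^2}{9\,V_0}.
\end{equation*}
Substituting the Kellerhals constant in dimension four should give the announced numerical bound $28.62869$.

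The only delicate step in this plan is extracting the exact value of $V_0$ (and its proper normalisation for the hyperbolic volume on $\mathbb{H}^4$) from \cite{K}; all other inputs are already at our disposal. The strict inequality in the claim will follow from the observation that the two chains of estimates cannot be saturated simultaneously: the Kellerhals cusp-volume bound is not realised by any actual cusped hyperbolic $4$-manifold whose cusps are all of type $F_5$. This is also consistent with the authors' own remark that the bound is presumably very far from sharp.
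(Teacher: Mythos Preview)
Your proposal is correct and follows essentially the same route as the paper: the authors combine Kellerhals' per-cusp volume bound $\Vol(M) > 0.61293\,k$ (Formula~(4.6) in \cite[Example~2]{K}) with Long--Reid and Ouyang to get $|\sigma(M)| \leq \tfrac{4}{3}k$, and then apply Gau\ss--Bonnet to obtain $|\alpha(M)| < \tfrac{16\pi^2}{9\cdot 0.61293} < 28.62869$. Note that the strict inequality comes directly from the strictness in Kellerhals' estimate, so your extra argument about non-simultaneous saturation is unnecessary.
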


\begin{proof}
From Formula (4.6) in \cite[Example 2]{K} we have $\mathrm{Vol}(M) > 0.61293 \cdot k$, where $k$ is the number of cusps of $M$. By Theorem \ref{thm:long-reid} and the values of $\eta(F_i)$ listed above, we have $\frac{4}{3} \cdot k \geq |\sigma(M)|$. Then the claimed inequality follows by applying the Gau\ss--Bonnet formula $\mathrm{Vol}(M) = \frac{4 \pi^2}{3} \cdot \chi(M)$. 
\end{proof}

\section{Proofs} \label{sec:proof}

In the following subsection we prove Theorem \ref{thm:main}. The essential geometric construction used in our proof (Theorem \ref{thm:construction}) is postponed to a separate subsection.

\subsection{The proof}

A \emph{hyperbolic $24$--cell manifold} is a hyperbolic $4$--manifold which can be obtained by gluing isometrically in pairs the sides of an ideal right-angled $24$--cell. Such a manifold $M$ satisfies $\chi(M) = 1$.

Recall that the fundamental group of a flat quarter-twist $3$--manifold $F_4$ is generated by two translations $t_1$ and $t_2$, and a rototranslation $a$ whose rotational part has order $4$. Note that $t_1$, $t_2$ and $t_3 = a^4$ generate the translation lattice of $F_4$. 

Given a hyperbolic $4$--manifold $M$ with a cusp $C$ of type $F_4$, let $\pi_1(C) = \langle t_1, t_2, a \rangle < \pi_1(M)$ denote the corresponding parabolic subgroup.

In the next subsection we shall prove the following theorem, that is the cornerstone of our construction. 

\begin{thm} \label{thm:construction}
There exist an orientable hyperbolic $24$--cell manifold $M$ with one cusp $C$ of type $F_4$ and all the other cusps of type $F_1$, and two surjective homomorphisms $h, v \colon \pi_1(M) \to \matZ$ such that $\pi_1(C) = \langle t_1, t_2, a \rangle \subset \ker(h)$, while $v(t_1) = v(t_2) = 0$ and $v(a) = 1$.
\end{thm}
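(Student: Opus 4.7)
The plan is to produce $M$ explicitly as a side-pairing quotient of the ideal right-angled 24-cell $P \subset \matH^4$, then read the cusp structure directly from the combinatorics and build the homomorphisms from a Poincar\'e-style presentation of $\pi_1(M)$.

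First, I would recall the combinatorics of $P$: it has 24 octahedral sides and 24 ideal vertices, and the link of each ideal vertex is a flat unit cube. Since all dihedral angles of $P$ equal $\pi/2$, any orientation-preserving pairing of the sides in which every edge cycle has length exactly four yields an orientable hyperbolic $4$-manifold, and Gau\ss--Bonnet gives $\chi(M) = 1$. The cusps of $M$ are in bijection with the orbits of ideal vertices under the pairing, and the cross-section of each cusp is the flat closed $3$-manifold obtained by gluing together the cubes at the vertices of the orbit along the face identifications induced by the side pairing. The flat type of the cross-section is determined by its holonomy representation, which is visible in the combinatorics: an orbit whose cube-gluing has trivial rotational holonomy produces an $F_1$-cusp, while an orbit whose rotational holonomy generates a cyclic group of order four produces an $F_4$-cusp.

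The strategy is then to run a computer search through orientation-preserving side pairings of $P$ and select one for which the edge cycles all have length four (so that the quotient is a manifold), exactly one ideal-vertex orbit has order-four rotational holonomy, and all remaining orbits have trivial rotational holonomy. I would present the chosen pairing as an explicit table of 12 face identifications, after which verifying that the quotient is an orientable hyperbolic $4$-manifold with cusp types $(F_4, F_1, \ldots, F_1)$ reduces to a finite set of edge-cycle checks and cube-gluing diagrams that can be carried out by hand.

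For the homomorphisms, I would use the Poincar\'e polyhedron theorem to write a presentation $\pi_1(M) = \langle g_1, \ldots, g_{12} \mid R_1, \ldots, R_e \rangle$, with one generator per paired side and one relator per edge cycle. Tracking the orbit of the $F_4$-ideal vertex through the side pairings expresses the parabolic generators $t_1, t_2, a$ as explicit words in the $g_i$. Computing $H_1(M;\matZ)$ from the abelianised relation matrix (Smith normal form) then reduces the construction of $h$ and $v$ to solving two systems of linear equations over $\matZ$: for $h$ one needs a surjection $H_1(M;\matZ) \to \matZ$ vanishing on the images of $t_1, t_2, a$, and for $v$ a surjection sending $t_1, t_2 \mapsto 0$ and $a \mapsto 1$. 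Both are routine to check by hand once the presentation is in place.

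The main obstacle is the existence of the side pairing itself: there is no a priori reason that among the enormous combinatorial space of side pairings of $P$ there is one simultaneously realising the prescribed cusp configuration and admitting the two homomorphisms with the required behaviour on the $F_4$-parabolic. This is exactly the step where computer aid is essential. Once a suitable pairing is produced, however, the remaining verifications are purely combinatorial and linear-algebraic, and can be presented in a form that a reader can check without a machine.
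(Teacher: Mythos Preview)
Your proposal is correct and matches the paper's approach essentially point for point: the paper exhibits an explicit side pairing of the ideal right-angled $24$--cell (found by computer search), verifies by hand via the cube-link diagrams that the three cusps have types $F_4, F_1, F_1$, writes down the Poincar\'e presentation with $12$ generators and $24$ edge-cycle relators, expresses $t_1, t_2, a$ as words in those generators, and then tabulates $h$ and $v$ on the generators. Your identification of the computer search as the only non-routine step is exactly how the paper frames it.
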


We are ready to prove Theorem \ref{thm:main} assuming Theorem \ref{thm:construction}.

Since $\eta(F_4) = \pm 1$ and $\eta(F_1) = 0$, by Theorem \ref{thm:long-reid} $\sigma(M) = 1$ up to reversing the orientation on $M$. 

For $n \geq 1$, let $p_n \colon \matZ \to \matZ / n \matZ$ be the reduction mod $n$, and $M_n \to M$ be the cyclic $n$--sheeted covering associated to $h_n := p_n \circ h$. Since $\pi_1(C) \subset \ker(h_n)$, the manifold $M_n$ has $n$ cusps of type $F_4$. Moreover, there is an orientation-preserving isometry of $M_n$ that is cyclically permuting said cusps. All the remaining cusps of $M_n$ have type $F_1$. Thus $\sigma(M_n) = n$ by Theorem \ref{thm:long-reid}, and 
$\chi(M_n) = n \cdot \chi(M) = n$.

Now, fix $m, n \geq 1$ and let $M_{m,n} \to M_n$ be the cyclic $m$--sheeted covering associated to the restriction $v_{m,n}$ of $p_m \circ v$ to $\ker(h_n)$. Since $v_{m,n}(t_1) = v_{m,n}(t_2) = 0$ and $v_{m,n}(a) = 1$, the subgroup $\pi_1(C) \cap \ker(v_{m,n})$ of $\pi_1(M_{m,n})$ is generated by $t_1$, $t_2$ and $a^m$, and does not contain $a^k$ for any positive $k < m$. If $m$ is odd (or, equivalently, $m \equiv \pm 1 \mod 4$), the associated $n$ cusps of $M_{m,n}$ have type $F_4$. As before, the latter are coherently oriented, and all the remaining cusps are of type $F_1$. Thus $\sigma(M_{m,n}) = n$ by Theorem \ref{thm:long-reid}, and 
$\chi(M_{m,n}) = m \cdot \chi(M_n) = mn$.

The proof of Theorem \ref{thm:main} is thus complete assuming Theorem \ref{thm:construction}. 

\subsection{The construction}

We prove here Theorem \ref{thm:construction}.

The manifold $M$ is specified by the side pairing of a regular ideal $24$--cell, with vertices given in Table \ref{tab1}, by the correspondence between the vertices of paired sides provided in Table \ref{tab2}. We refer the reader to \cite[Section 11.1]{R} for more details on how to build hyperbolic $24$--cell manifolds by using Poincar\'{e}'s fundamental polytope theorem.

\begin{figure}
\centering
\includegraphics[scale=.35]{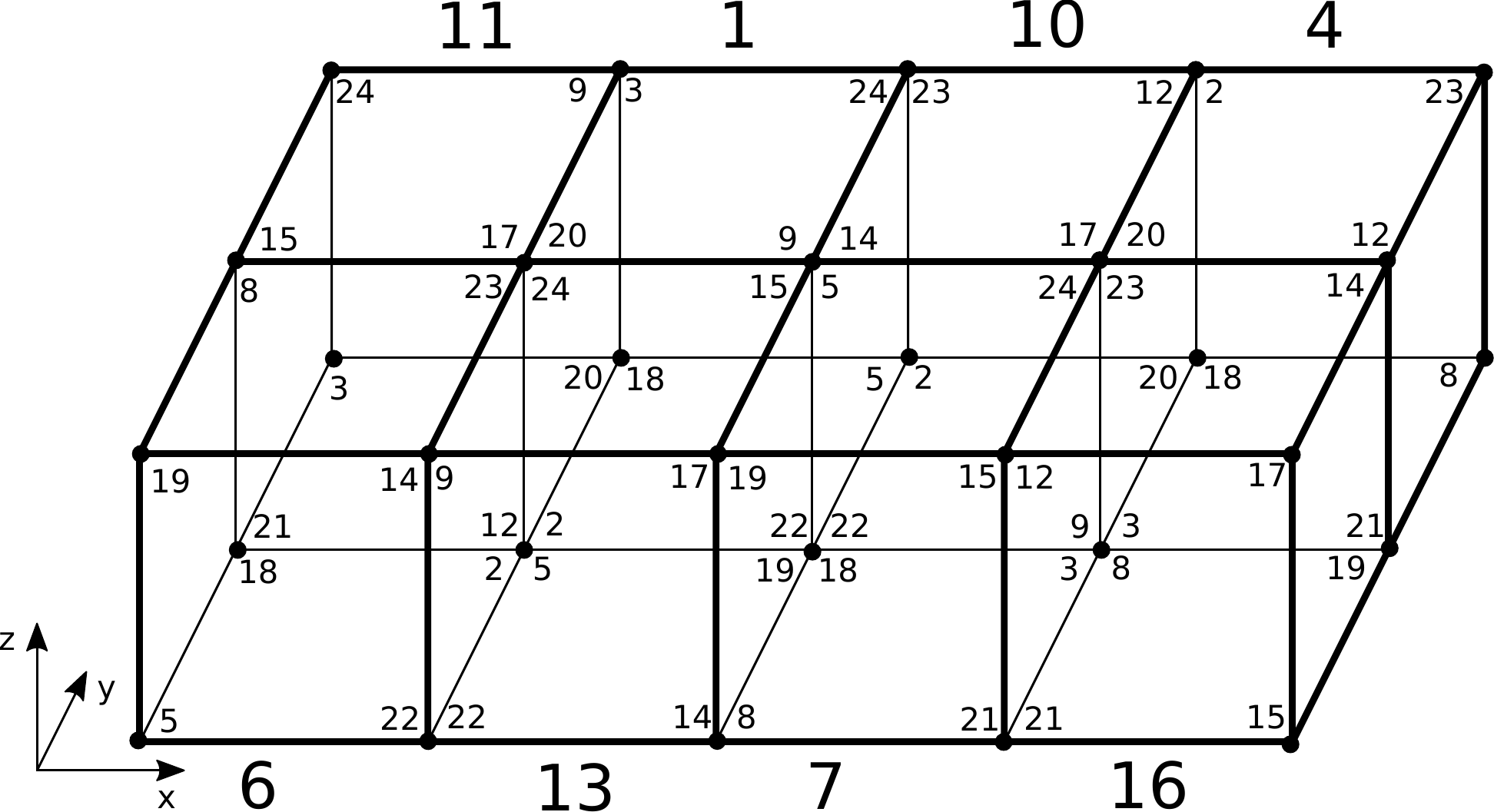}
\caption{\footnotesize A fundamental domain for a horosection of one of the two $3$--torus cusps of $M$ in its universal cover, tessellated by $8$ unit cubes. A cube with label $i$ corresponds to the vertex $v_i$ of the $24$--cell. A vertex of cube $i$ has label $j$ if the corresponding edge of the $24$--cell joins $v_i$ and $v_j$. The resulting Euclidean lattice is generated by the translations along $(4,0,0)$, $(0,2,0)$ and $(0,2,-1)$.} \label{fig:3-torus1}
\end{figure}

\begin{figure}
\centering
\includegraphics[scale=.35]{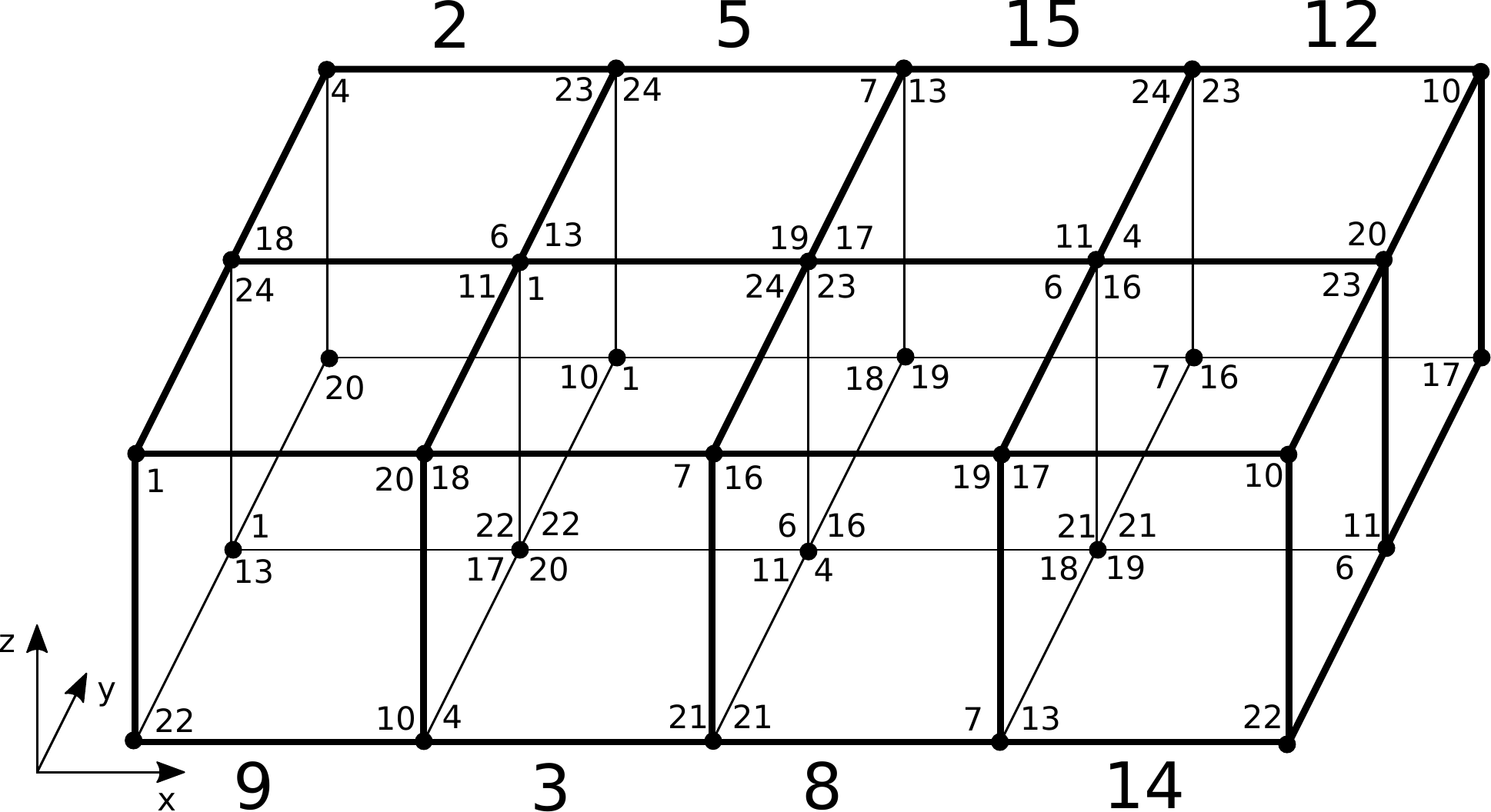}
\caption{\footnotesize A fundamental domain for a horosection of the other $3$--torus cusp of $M$ in its universal cover. The resulting Euclidean lattice is generated by the translations along $(4,0,0)$, $(4,0,-1)$ and $(2,2,0)$.} \label{fig:3-torus2}
\end{figure}

Alternatively, we can give the side pairing by specifying the matrices for each side pairing map.  Since side $1$ is paired to side $2$,
the side pairing map that carries side $2$ to side $1$ is the inverse of that for the map from side $1$ to side $2$, similarly for sides $3$ and $4$, etc.  
The matrices $g_i$ for sides $1, 3, 5, 6, 7, 8, 9, 10, 11, 12, 21$, and $22$
as given in Table \ref{tab3}. These twelve matrices generate the fundamental group of $M$ and the defining relations of this group are determined from the ridge cycles as shown in Table \ref{tab4}.  
There are $96$ ridges in cycles of length $4$, so there are $24$ defining relations. 

The manifold $M$ is orientable with homology groups
\begin{equation*}
H_0(M)=\mathbb{Z},\ H_1(M)=\mathbb{Z}^3,\ H_2(M)=\mathbb{Z}^5,\ H_3(M)=\mathbb{Z}^2,\ H_4(M)=0.    
\end{equation*}

Moreover, $M$ has three cusps: two of type $F_1$, and the other of type $F_4$. In order to facilitate a manual verification, the gluing of the $24$ cubes (the vertex links of the $24$--cell) producing the respective cusp sections are provided in Figures \ref{fig:3-torus1}, \ref{fig:3-torus2} and \ref{fig:quarter-twist}.

\begin{figure}
\centering
\includegraphics[scale=.35]{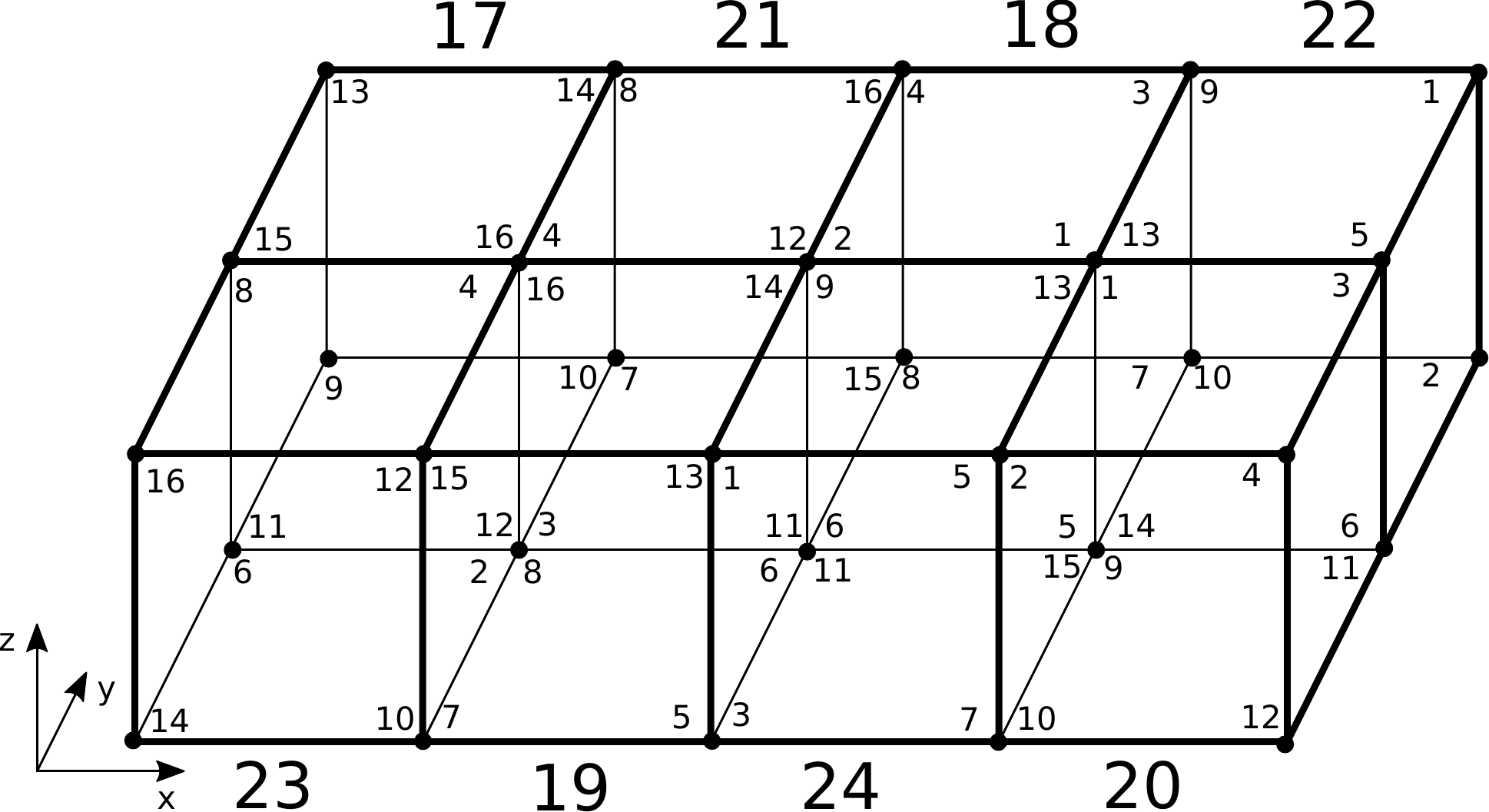}
\caption{\footnotesize A fundamental domain for a horosection of the $F_4$--cusp $C$ of $M$ in its universal cover. The resulting Euclidean lattice is generated by the translations $t_1$ and $t_2$ along $(1,1,0)$ and $(-1,1,0)$, and the rototranslation $a$ along $(0,0,1)$ whose rotational part has vertical axis through the center of cube $21$ (see also Figure \ref{fig:square_torus}).} \label{fig:quarter-twist}
\end{figure}

\begin{figure}
\centering
\includegraphics[scale=.5]{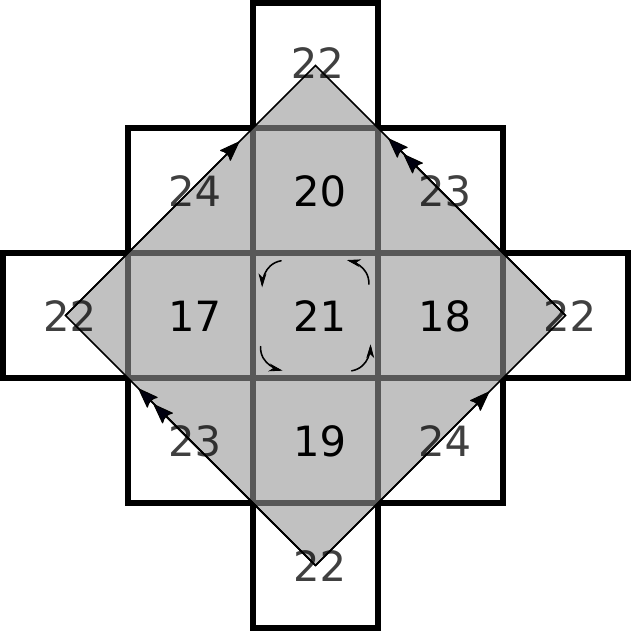}
\caption{\footnotesize A portion of a horizontal slice of the universal cover of a horosection of the $F_4$--cusp $C$ of $M$. The labels agree with the vertex indices of the $24$--cell. The shaded square is a fundamental domain for the translation group $\langle t_1, t_2 \rangle$. The ``quarter-turn'' action of $a$ in the direction orthogonal to the slice is shown by arrows in the center.} \label{fig:square_torus}
\end{figure}

The cusp $C$ of type $F_4$ is the link of the vertex cycle consisting of vertices $v_{17}$, $v_{18}$, $v_{19}$, $v_{20}$, $v_{21}$, $v_{22}$, $v_{23}$, and $v_{24}$. The flat $3$--manifold for the link of $C$ is realised as a gluing of $8$ cubes, giving a presentation for $\pi_1(C)$ with $24$ generators having $7$ singleton and $24$ length four defining relations (such a presentation can certainly be simplified considerably, although we do not need to do this). The $8$ cubes glue together in a pattern corresponding to a square $2$--torus tiled by $8$ squares with sides at $\frac{\pi}{4}$ angle to the axes of the torus as in Figure \ref{fig:square_torus}, multiplied by an interval in order to produce $8$ cubes. The top and bottom tori are identified with a $\frac{\pi}{2}$ twist of the square torus, which is exactly the gluing pattern for the $F_4$ manifold.

Extracting the words in the generators of $\pi_1(M)$ giving parabolic elements stabilising vertex $v_{21}$ produces the generators for $\pi_1(C)$, see the accompanying Tables \ref{tab5} and \ref{tab6}. The two homomorphisms $h$ and $v$ are defined in Table \ref{tab7}.

The proof of Theorem \ref{thm:construction} is now complete.

\begin{table}[h]
$$
\begin{array}{ll}
 v_{1} = \left(-\frac{1}{2},-\frac{1}{2},-\frac{1}{2},-\frac{1}{2},1\right) &
 v_{2} = \left(-\frac{1}{2},-\frac{1}{2},-\frac{1}{2},\frac{1}{2},1\right) \\[5pt]
 v_{3} = \left(-\frac{1}{2},-\frac{1}{2},\frac{1}{2},-\frac{1}{2},1\right) &
 v_{4} = \left(-\frac{1}{2},-\frac{1}{2},\frac{1}{2},\frac{1}{2},1\right) \\[5pt]
 v_{5} = \left(-\frac{1}{2},\frac{1}{2},-\frac{1}{2},-\frac{1}{2},1\right) &
 v_{6} = \left(-\frac{1}{2},\frac{1}{2},-\frac{1}{2},\frac{1}{2},1\right) \\[5pt]
 v_{7} = \left(-\frac{1}{2},\frac{1}{2},\frac{1}{2},-\frac{1}{2},1\right) &
 v_{8} = \left(-\frac{1}{2},\frac{1}{2},\frac{1}{2},\frac{1}{2},1\right) \\[5pt]
 v_{9} = \left(\frac{1}{2},-\frac{1}{2},-\frac{1}{2},-\frac{1}{2},1\right) &
 v_{10} = \left(\frac{1}{2},-\frac{1}{2},-\frac{1}{2},\frac{1}{2},1\right) \\[5pt]
 v_{11} = \left(\frac{1}{2},-\frac{1}{2},\frac{1}{2},-\frac{1}{2},1\right) &
 v_{12} = \left(\frac{1}{2},-\frac{1}{2},\frac{1}{2},\frac{1}{2},1\right) \\[5pt]
 v_{13} = \left(\frac{1}{2},\frac{1}{2},-\frac{1}{2},-\frac{1}{2},1\right) &
 v_{14} = \left(\frac{1}{2},\frac{1}{2},-\frac{1}{2},\frac{1}{2},1\right) \\[5pt]
 v_{15} = \left(\frac{1}{2},\frac{1}{2},\frac{1}{2},-\frac{1}{2},1\right) &
 v_{16} = \left(\frac{1}{2},\frac{1}{2},\frac{1}{2},\frac{1}{2},1\right) \\[5pt]
 v_{17} = (1,0,0,0,1) &
 v_{18} = (-1,0,0,0,1) \\[5pt]
 v_{19} = (0,1,0,0,1) &
 v_{20} = (0,-1,0,0,1) \\[5pt]
 v_{21} = (0,0,1,0,1) &
 v_{22} = (0,0,-1,0,1) \\[5pt]
 v_{23} = (0,0,0,1,1) &
 v_{24} = (0,0,0,-1,1) \\[5pt]
\end{array}
$$
\caption{\footnotesize The vertices of an ideal regular $24$--cell in the hyperboloid model of hyperbolic $4$--space.}\label{tab1}
\end{table}

\begin{table}[ht]
\vspace{1.75in}
$$
\begin{array}{ccll}
 \vspace{.3cm} 
 \text{From side} & \text{To side} & \text{Vertex map}\\
 1 & 2 & (13,14,15,16,17,19)&\mapsto\ \  (7,8,5,6,19,18) \\
 2 & 1 & (5,6,7,8,18,19)&\mapsto\ \  (15,16,13,14,19,17) \\
 3 & 4 & (9,10,11,12,17,20)&\mapsto\ \  (3,4,1,2,20,18) \\
 4 & 3 & (1,2,3,4,18,20)&\mapsto\ \  (11,12,9,10,20,17) \\
 5 & 14 & (11,12,15,16,17,21)&\mapsto\ \  (6,2,8,4,23,18) \\
 6 & 13 & (3,4,7,8,18,21)&\mapsto\ \  (12,16,10,14,23,17) \\
 7 & 16 & (9,10,13,14,17,22)&\mapsto\ \  (3,7,1,5,24,18) \\
 8 & 15 & (1,2,5,6,18,22)&\mapsto\ \  (13,9,15,11,24,17) \\
 9 & 18 & (7,8,15,16,19,21)&\mapsto\ \  (10,2,12,4,23,20) \\
 10 & 17 & (3,4,11,12,20,21)&\mapsto\ \  (8,16,6,14,23,19) \\
 11 & 20 & (5,6,13,14,19,22)&\mapsto\ \  (3,11,1,9,24,20) \\
 12 & 19 & (1,2,9,10,20,22)&\mapsto\ \  (13,5,15,7,24,19) \\
 13 & 6 & (10,12,14,16,17,23)&\mapsto\ \  (7,3,8,4,21,18) \\
 14 & 5 & (2,4,6,8,18,23)&\mapsto\ \  (12,16,11,15,21,17) \\
 15 & 8 & (9,11,13,15,17,24)&\mapsto\ \  (2,6,1,5,22,18) \\
 16 & 7 & (1,3,5,7,18,24)&\mapsto\ \  (13,9,14,10,22,17) \\
 17 & 10 & (6,8,14,16,19,23)&\mapsto\ \  (11,3,12,4,21,20) \\
 18 & 9 & (2,4,10,12,20,23)&\mapsto\ \  (8,16,7,15,21,19) \\
 19 & 12 & (5,7,13,15,19,24)&\mapsto\ \  (2,10,1,9,22,20) \\
 20 & 11 & (1,3,9,11,20,24)&\mapsto\ \  (13,5,14,6,22,19) \\
 21 & 23 & (4,8,12,16,21,23)&\mapsto\ \  (11,3,15,7,21,24) \\
 22 & 24 & (2,6,10,14,22,23)&\mapsto\ \  (5,13,1,9,22,24) \\
 23 & 21 & (3,7,11,15,21,24)&\mapsto\ \  (8,16,4,12,21,23) \\
 24 & 22 & (1,5,9,13,22,24)&\mapsto\ \  (10,2,14,6,22,23) \\[5pt]
\end{array}
$$
\caption{\footnotesize Side pairings defining $M$.}\label{tab2}
\end{table} 

\begin{table}
$$
\begin{array}{ll}
 g_1 = \left(
\begin{array}{ccccc}
 2 & 1 & 0 & 0 & -2 \\
 -1 & -2 & 0 & 0 & 2 \\
 0 & 0 & -1 & 0 & 0 \\
 0 & 0 & 0 & 1 & 0 \\
 -2 & -2 & 0 & 0 & 3 \\
\end{array}
\right) &
 g_3 = \left(
\begin{array}{ccccc}
 2 & -1 & 0 & 0 & -2 \\
 1 & -2 & 0 & 0 & -2 \\
 0 & 0 & -1 & 0 & 0 \\
 0 & 0 & 0 & 1 & 0 \\
 -2 & 2 & 0 & 0 & 3 \\
\end{array}
\right) \\[30pt]
 g_5 = \left(
\begin{array}{ccccc}
 2 & 0 & 1 & 0 & -2 \\
 0 & 0 & 0 & -1 & 0 \\
 0 & 1 & 0 & 0 & 0 \\
 -1 & 0 & -2 & 0 & 2 \\
 -2 & 0 & -2 & 0 & 3 \\
\end{array}
\right) &
 g_6 = \left(
\begin{array}{ccccc}
 2 & 0 & -1 & 0 & 2 \\
 0 & 0 & 0 & 1 & 0 \\
 0 & -1 & 0 & 0 & 0 \\
 1 & 0 & -2 & 0 & 2 \\
 2 & 0 & -2 & 0 & 3 \\
\end{array}
\right) \\[30pt]
 g_7 = \left(
\begin{array}{ccccc}
 2 & 0 & -1 & 0 & -2 \\
 0 & 0 & 0 & 1 & 0 \\
 0 & -1 & 0 & 0 & 0 \\
 1 & 0 & -2 & 0 & -2 \\
 -2 & 0 & 2 & 0 & 3 \\
\end{array}
\right) &
 g_8 = \left(
\begin{array}{ccccc}
 2 & 0 & 1 & 0 & 2 \\
 0 & 0 & 0 & -1 & 0 \\
 0 & 1 & 0 & 0 & 0 \\
 -1 & 0 & -2 & 0 & -2 \\
 2 & 0 & 2 & 0 & 3 \\
\end{array}
\right) \\[30pt]
 g_9 = \left(
\begin{array}{ccccc}
 0 & 0 & 0 & -1 & 0 \\
 0 & 2 & 1 & 0 & -2 \\
 1 & 0 & 0 & 0 & 0 \\
 0 & -1 & -2 & 0 & 2 \\
 0 & -2 & -2 & 0 & 3 \\
\end{array}
\right) &
 g_{10} = \left(
\begin{array}{ccccc}
 0 & 0 & 0 & 1 & 0 \\
 0 & 2 & -1 & 0 & 2 \\
 -1 & 0 & 0 & 0 & 0 \\
 0 & 1 & -2 & 0 & 2 \\
 0 & 2 & -2 & 0 & 3 \\
\end{array}
\right) \\[30pt]
 g_{11} = \left(
\begin{array}{ccccc}
 0 & 0 & 0 & 1 & 0 \\
 0 & 2 & -1 & 0 & -2 \\
 -1 & 0 & 0 & 0 & 0 \\
 0 & 1 & -2 & 0 & -2 \\
 0 & -2 & 2 & 0 & 3 \\
\end{array}
\right) &
 g_{12} = \left(
\begin{array}{ccccc}
 0 & 0 & 0 & -1 & 0 \\
 0 & 2 & 1 & 0 & 2 \\
 1 & 0 & 0 & 0 & 0 \\
 0 & -1 & -2 & 0 & -2 \\
 0 & 2 & 2 & 0 & 3 \\
\end{array}
\right) \\[30pt]
 g_{21} = \left(
\begin{array}{ccccc}
 0 & -1 & 0 & 0 & 0 \\
 1 & 0 & 0 & 0 & 0 \\
 0 & 0 & -1 & -2 & 2 \\
 0 & 0 & 2 & 1 & -2 \\
 0 & 0 & -2 & -2 & 3 \\
\end{array}
\right) &
 g_{22} = \left(
\begin{array}{ccccc}
 0 & 1 & 0 & 0 & 0 \\
 -1 & 0 & 0 & 0 & 0 \\
 0 & 0 & -1 & 2 & -2 \\
 0 & 0 & -2 & 1 & -2 \\
 0 & 0 & 2 & -2 & 3 \\
\end{array}
\right) \\[30pt]
\end{array}
$$
\caption{\footnotesize Generators of $\pi_1(M)$ in $\SO(4,1)$.}\label{tab3}
\end{table}

\begin{table}
$$
\begin{array}{llll}
 g_3g_{10}^{-1}g_{22}^{-1}g_8 &
 g_3g_5^{-1}g_{22}^{-1}g_{12} &
 g_7g_8g_{12}^{-1}g_{11}^{-1} &
 g_3g_{11}g_{22}^{-1}g_7^{-1} \\[5pt]
 g_3g_8g_{22}^{-1}g_{11}^{-1} &
 g_7g_{11}^{-1}g_{12}^{-1}g_8 &
 g_1g_8g_{22}g_{12}^{-1} &
 g_1g_7^{-1}g_{22}g_9 \\[5pt]
 g_1g_{12}g_{22}g_7^{-1} &
 g_1g_{11}^{-1}g_{22}g_6 &
 g_3g_9g_{21}g_5^{-1} &
 g_3g_6g_{21}g_9^{-1} \\[5pt]
 g_5g_6g_{10}^{-1}g_9^{-1} &
 g_5g_{10}^{-1}g_{11}^{-1}g_8 &
 g_6g_9^{-1}g_{12}^{-1}g_7 &
 g_3g_{12}^{-1}g_{21}g_6 \\[5pt]
 g_3g_7^{-1}g_{21}g_{10} &
 g_6g_7g_{12}^{-1}g_9^{-1} &
 g_5g_8g_{11}^{-1}g_{10}^{-1} &
 g_1g_6g_{21}^{-1}g_{10}^{-1} \\[5pt]
 g_1g_5^{-1}g_{21}^{-1}g_{11} &
 g_5g_9^{-1}g_{10}^{-1}g_6 &
 g_1g_{10}g_{21}^{-1}g_5^{-1} &
 g_1g_9^{-1}g_{21}^{-1}g_8 \\[5pt]
\end{array}
$$
\caption{\footnotesize Defining relations for $\pi_1(M)$.}\label{tab4}
\end{table}

\begin{table}
$$
\begin{array}{ll}
 t_1 = g_5^{-1}g_8^{-1}g_{12}g_9= & \left(
\begin{array}{ccccc}
 1 & 0 & -4 & 0 & 4 \\
 0 & 1 & 4 & 0 & -4 \\
 4 & -4 & -15 & 0 & 16 \\
 0 & 0 & 0 & 1 & 0 \\
 4 & -4 & -16 & 0 & 17 \\
\end{array}
\right) \\
 t_2 = g_9^{-1}g_{10}^{-1}g_5g_6= & \left(
\begin{array}{ccccc}
 1 & 0 & -4 & 0 & 4 \\
 0 & 1 & -4 & 0 & 4 \\
 4 & 4 & -15 & 0 & 16 \\
 0 & 0 & 0 & 1 & 0 \\
 4 & 4 & -16 & 0 & 17 \\
\end{array}
\right) \\
 t_3 = g_{21}^4= & \left(
\begin{array}{ccccc}
 1 & 0 & 0 & 0 & 0 \\
 0 & 1 & 0 & 0 & 0 \\
 0 & 0 & -31 & -8 & 32 \\
 0 & 0 & 8 & 1 & -8 \\
 0 & 0 & -32 & -8 & 33 \\
\end{array}
\right) \\
 a = g_{21}= & \left(
\begin{array}{ccccc}
 0 & -1 & 0 & 0 & 0 \\
 1 & 0 & 0 & 0 & 0 \\
 0 & 0 & -1 & -2 & 2 \\
 0 & 0 & 2 & 1 & -2 \\
 0 & 0 & -2 & -2 & 3 \\
\end{array}
\right) \\[5pt]
\end{array}
$$
\caption{\footnotesize Generators of $\pi_1(C)$ in $\SO(4,1)$.}\label{tab5}
\end{table}

\vspace{4cm}

\begin{table}
$$
\begin{array}{llll}
 a^4t_3^{-1} &
 at_1a^{-1}t_2^{-1} &
 at_2a^{-1}t_1 &
 at_3a^{-1}t_3^{-1} \\[5pt]
 t_1t_2t_1^{-1}t_2^{-1} &
 t_1t_3t_1^{-1}t_3^{-1} &
 t_2t_3t_2^{-1}t_3^{-1} &\\[5pt]
\end{array}
$$
\caption{\footnotesize Defining relations for $\pi_1(C)$.}\label{tab6}
\end{table}

\vspace{4cm}

\begin{table}
$$
\begin{array}{lllllllll}
h(g_1) = 1\ & h(g_3) = 1\ & h(g_5) = 2\ & h(g_6) = 0 \\[5pt]
h(g_7) = 2\ & h(g_8) = 0\ & h(g_9) = 1\ & h(g_{10}) = 1 \\[5pt]
h(g_{11}) = 1\ & h(g_{12}) = 1\ & h(g_{21}) = 0\ & h(g_{22}) = 0 \\[10pt]
v(g_1) = 2\ & v(g_3) = 0\ & v(g_5) = 2\ & v(g_6) = 0 \\[5pt]
v(g_7) = 2\ & v(g_8) = 0\ & v(g_9) = 1\ & v(g_{10}) = 1 \\[5pt]
v(g_{11}) = 1\ & v(g_{12}) = 1\ & v(g_{21}) = 1\ & v(g_{22}) = -1 \\[5pt]
\end{array}
$$
\caption{\footnotesize The two homomorphisms $h, v \colon \pi_1(M) \to \matZ$.}\label{tab7}
\end{table} 

\FloatBarrier


\begin{thebibliography}{99}

\bibitem{APS} \textsc{M.~F.~Atiyah, V.~K.~Patodi, I.~M.~Singer}, \emph{Spectral asymmetry and Riemannian geometry, I}, Math.~Proc.~Camb.~Phil.~Soc. \textbf{77} (1975), 43--69.

\bibitem{BGLM} \textsc{M.~Burger, T.~Gelander, A.~Lubotzky, S.~Mozes}, \emph{Counting hyperbolic manifolds},  Geom.~Funct.~Anal. \textbf{12 (6)} (2002), 1161--1173.

\bibitem{GL} \textsc{T.~Gelander, A.~Levit}, \emph{Counting commensurabilty classes of hyperbolic manifolds}, Geom.~Funct.~Anal. \textbf{24 (5)} (2014), 1431--1447. 

\bibitem{Gromov} \textsc{M. Gromov}, \emph{Asymptotic invariants of infinite groups}, in: Geometric group theory, Vol. 2 (A. Niblo, M. A. Roller eds.), London Math. Soc. Lecture Note Ser. \textbf{182}, Cambridge Univ. Press, Cambridge, 1993, vii + 295 pp.

\bibitem{HW} \textsc{W.~Hantzsche, H.~Wendt}, \emph{Dreidimensionale euklidische Raumformen}, Math.~Ann. \textbf{110} (1935), 593--611. 

\bibitem{H} \textsc{A.~Hatcher}, \emph{Hyperbolic structures of arithmetic type on some link complements}, J.~London~Math.~Soc. \textbf{27} (1983), 345--355.

\bibitem{JK} \textsc{F. E. A. Johnson, D. Kotschick}, \emph{On the signature and Euler characteristic of certain four-manifolds}, Math. Proc. Cambridge Phil. Soc. \textbf{114} (1993), 431--437.

\bibitem{K} \textsc{R.~Kellerhals}, \emph{Volumes of cusped hyperbolic manifolds}, Topology \textbf{37} (1998), 719--734. 

\bibitem{KRS} \textsc{A.~Kolpakov, A.~W.~Reid, L.~Slavich}, \emph{Embedding arithmetic hyperbolic manifolds}, Math.~Res.~Lett. \textbf{25} (2018), 1305--1328.

\bibitem{KRS2} \textsc{A.~Kolpakov, S.~Riolo, L.~Slavich}, \emph{Embedding non-arithmetic hyperbolic manifolds}, to appear in Math.~Res.~Lett., \href{https://arxiv.org/abs/2003.01707}{\texttt{arXiv:2003.01707}}.

\bibitem{LR} \textsc{D.~D.~Long, A.~W.~Reid}, \emph{On the geometric boundaries of hyperbolic $4$--manifolds}, Geom.~Topol. \textbf{4} (2000), 171--178.

\bibitem{M} \textsc{B.~Martelli}, \emph{Hyperbolic four-manifolds}, in ``Handbook of group actions Vol. III'', Adv.~Lect.~Math. \textbf{40} (2018), 37--58.

\bibitem{M2} \textsc{B.~Martelli}, \emph{Hyperbolic three-manifolds that embed geodesically}, \href{https://arxiv.org/abs/1510.06325}{\texttt{arXiv:1510.06325}}.

\bibitem{MRS} \textsc{D.~B.~McReynolds, A.~W.~Reid, M.~Stover}, \emph{Collisions at infinity in hyperbolic manifolds}, Math. Proc. Cambridge Phil. Soc. \textbf{155} (2013), 459--463.

\bibitem{O} \textsc{M.~Q.~Ouyang}, \emph{Geometric invariants for Seifert fibred $3$--manifolds}, Trans. Amer. Math. Soc. \textbf{346} (1994), 641--659.

\bibitem{R} \textsc{J.~G.~Ratcliffe}, ``Foundations of Hyperbolic Manifolds'', Graduate Texts in Mathematics \textbf{149}, Springer (1994).

\bibitem{RT} \textsc{J.~G.~Ratcliffe, S.~T.~Tschantz}, \emph{The volume spectrum of hyperbolic $4$--manifolds}, Experiment.~Math \textbf{9} (2000), 101--125.

\bibitem{RT2} \textsc{J.~G.~Ratcliffe, S.~T.~Tschantz}, \emph{Hyperbolic $24$--cell $4$--manifolds with one cusp}, Experiment.~Math (2021), online first; \href{https://arxiv.org/abs/2004.07284}{\texttt{arXiv:2004.07284}}.

\bibitem{Stipsicz} \textsc{A.~I.~Stipsicz}, The geography problem of $4$--manifolds with various structures. Acta Math. Hungarica \textbf{87} (2000), 267--278.

\end{thebibliography}
\end{document}